\let\citationorig\citation
\def\citation#1{\citationorig{#1}\@for\@tempa:=#1\do{\@ifundefined{cit@\@tempa}{\global\@namedef{cit@\@tempa}{}}{}}}
\let\bibitemorig\bibitem
\def\bibitem#1{\@ifundefined{cit@#1}{\typeout{LaTeX Warning: Unused bibitem `#1'}}{}\bibitemorig{#1}}
\let\old@setaddresses\@setaddresses
\def\@setaddresses{\bigskip{\parindent 0pt\let\scshape\relax\let\ttfamily\relax\old@setaddresses}}
\def\periodsf{\spacefactor 3000 \space}
\newtheorem{theorem}{Theorem}
\newtheorem{lemma}[theorem]{Lemma}
\theoremstyle{remark}
\newtheorem{condition}{Condition}
\newtheorem{claim}[condition]{Claim}
\newtheorem{reduction}{Reduction to Condition}
\newtheorem{claimproof}[reduction]{Proof of Claim}
\renewenvironment{itemize}{\begin{itemorig}[label=\textbullet, noitemsep, topsep=3pt plus 3pt, labelsep=.6em, labelindent=.2em, leftmargin=*]}{\end{itemorig}}
\def\famF{\mathcal{F}}
\def\famH{\mathcal{H}}
\def\famL{\mathcal{L}}
\def\famS{\mathcal{S}}
\let\leq\leqslant
\let\setminus\smallsetminus
\title[Coloring triangle-free L-graphs with $O(\log\log n)$ colors]{\boldmath Coloring triangle-free L-graphs with $O(\log\log n)$ colors}
\author{Bartosz Walczak}
\address{Department of Theoretical Computer Science, Faculty of Mathematics and Computer Science, Jagiellonian University, Kraków, Poland}
\email{\href{mailto:walczak@tcs.uj.edu.pl}{walczak@tcs.uj.edu.pl}}
\thanks{An extended abstract of this paper appeared in \href{http://www.iam.fmph.uniba.sk/amuc/ojs/index.php/amuc/article/view/1255}{\emph{Acta Math.\ Univ.\ Comenianae} 88~(3), 1063--1069, 2019}.}
\thanks{Work partially supported by National Science Center of Poland grant 2015/17/D/ST1/00585.}
\begin{document}

\begin{abstract}
It is proved that triangle-free intersection graphs of $n$ L-shapes in the plane have chromatic number $O(\log\log n)$.
This improves the previous bound of $O(\log n)$ (McGuinness, 1996) and matches the known lower bound construction (Pawlik et~al., 2013).
\end{abstract}

\maketitle

\section{Introduction}

The \emph{intersection graph} of a family of sets $\famF$ has these sets as vertices and the pairs of the sets that intersect as edges.
An \emph{L-shape} is a set in the plane formed by one horizontal segment and one vertical segment joined at the left endpoint of the former and the bottom endpoint of the latter, as in the letter L\@.
An \emph{L-graph} is an intersection graph of L-shapes.
A stretching argument of Middendorf and Pfeiffer \cite{MP92} shows that L-graphs form a subclass of the \emph{segment graphs}, that is, intersection graphs of straight-line segments in the plane.
Segment graphs form a subclass of the \emph{string graphs}---intersection graphs of generic curves in the plane.

L-graphs are perhaps not as natural as segment graphs, but they capture a lot of complexity of segment graphs while being significantly easier to deal with.
For instance, the famous result of Chalopin and Gonçalves \cite{CG09} that all planar graphs are segment graphs (solution to Scheinerman's conjecture \cite{Sch84}) was recently strengthened by Gonçalves, Isenmann, and Pennarun \cite{GIP18} who showed, with a much simpler and more elegant argument, that all planar graphs are L-graphs.
Other recent works on L-graphs and their relation to other classes of graphs include \cite{ADD+-arxiv,FKMU16,JT19}.

Our concern in this paper is how large the chromatic number $\chi$ can be in terms of the number of vertices $n$ for triangle-free geometric intersection graphs.
Classical constructions of triangle-free graphs with arbitrarily large chromatic number such as Mycielski graphs \cite{Myc55} and shift graphs \cite{EH64} achieve $\chi=\varTheta(\log n)$ but are not realizable as string graphs.
Non-constructive methods even provide triangle-free graphs with $\chi=\varTheta(\sqrt{n/\log n})$ \cite{Kim95}.
However, for triangle-free geometric intersection graphs (string graphs), even to determine whether the chromatic number can grow arbitrarily high was a long-standing open problem.
Raised in the 1980s by Erdős for segment graphs (see \cite{Gya87}) and by Gyárfás and Lehel \cite{GL85} for L-graphs, it was solved only recently \cite{PKK+13,PKK+14}.

\begin{theorem}[Pawlik et~al.\ \cite{PKK+13}]
\label{thm:pawlik}
There exist triangle-free L-graphs with\/ $\chi=\varTheta(\log\log n)$.
\end{theorem}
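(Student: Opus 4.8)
The plan is to prove the statement as a lower-bound construction: I will exhibit, for every $k$, a triangle-free L-graph $G_k$ whose intersection graph has chromatic number $\geq k$ on $n_k = 2^{2^{O(k)}}$ vertices, so that $k = \Omega(\log\log n_k)$; combined with the $O(\log\log n)$ upper bound that is the subject of this paper, this yields $\chi = \Theta(\log\log n)$ along the family $(G_k)$. The construction is recursive, and the natural object to recurse on is not a bare family of L-shapes but a \emph{configuration} $(\famF,\mathcal{P})$, consisting of a finite family $\famF$ of L-shapes whose intersection graph is triangle-free together with a finite set $\mathcal{P}$ of \emph{probes}. A probe $P$ is a region of the plane disjoint from every shape of $\famF$, tagged with an independent set $\famF_P \subseteq \famF$ (the shapes it \emph{sees}), positioned so that one new L-shape can be placed with its corner inside $P$ and its two arms crossing exactly the shapes of $\famF_P$. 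I will say that $(\famF,\mathcal{P})$ \emph{forces} $k$ if in every proper coloring of the intersection graph of $\famF$ some probe sees at least $k$ distinct colors; since forcing $k$ already implies $\chi(\famF) \geq k$, a configuration forcing $k$ furnishes the desired graph $G_k$.

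The base case is a single L-shape with one probe seeing it, which forces $1$. The heart of the argument is a recursive step turning a configuration forcing $k$ into one forcing $k+1$ while squaring the number of probes. The idea is to take one \emph{root} copy of the configuration and, inside the region of each root probe $P$, to place a fresh shrunken \emph{inner} copy together with a new L-shape $s_P$ whose arms cross exactly $\famF_P$. The new probes are indexed by pairs (root probe, inner probe): such a probe is fired by an L-shape whose corner lies in the inner region and whose arms reach both the inner shapes seen by the inner probe and the inserted shape $s_P$. Because $s_P$ is adjacent to the whole seen set $\famF_P$, its color must differ from the $\geq k$ colors that the root forcing produces there; the role of the inner copy is to relay those colors to an independent set that $s_P$ does \emph{not} meet, so that a single new probe can witness the $k$ relayed colors together with the extra color of $s_P$, namely $k+1$ colors. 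Since each of the (number of probes) inner copies contributes (number of probes) new probes, the probe count squares, and a short induction then gives both the probe count and the shape count of the form $2^{2^{\Theta(k)}}$, which is exactly what is needed for $k = \Theta(\log\log n_k)$.

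Two points require care, and the second is where I expect the real difficulty to lie. First, one must verify that the whole family stays triangle-free: the root copy and all inner copies are pairwise geometrically separated (each inner copy sits inside a distinct probe region disjoint from the root shapes), and every inserted shape $s_P$ meets only the independent set $\famF_P$, so no triangle is created; this is a bookkeeping matter given a careful layout. The genuinely delicate part is the coupling between the root colors and the inner colors: a careless construction leaves the root and inner copies independent in the intersection graph, and then an adversary may color them from a common palette so that no probe ever sees more than $k$ colors. The construction must therefore force, by adjacencies alone and without ever closing a triangle, that the $k$ colors exhibited by the inner copy's forced probe avoid the color of $s_P$. Engineering this "one extra color per level" mechanism geometrically with honest L-shapes, so that the prescribed crossings occur and no unintended crossings do, is the main obstacle and the technical core of the Pawlik et~al.\ construction.
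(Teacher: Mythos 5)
This theorem is quoted from Pawlik et~al.\ \cite{PKK+13}; the present paper gives no proof of it, so your proposal can only be measured against the known construction. Your scaffolding (probes tagged with independent sets, the ``forces $k$'' invariant, one root copy plus one inner copy per root probe, squaring probe counts, doubly exponential size) is indeed the right skeleton. But the inductive step you describe does not prove what you need, and the hole is not, as you suggest, a matter of ``engineering the coupling geometrically'': no geometric layout can couple the colors of the root copy to those of an inner copy, because the two copies are disjoint in the intersection graph and an adversary colors them independently. Concretely, your only new probe at the pair $(P,Q)$ sees $\famF_Q\cup\{s_P\}$; the color of $s_P$ avoids the $k$ colors on $\famF_P$, but nothing prevents it from coinciding with one of the $k$ colors on $\famF_Q$, so this probe may see only $k$ colors. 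You have correctly located the obstruction but left it unresolved, and that is the entire content of the induction.

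The actual fix in \cite{PKK+13} is combinatorial, not geometric: one introduces \emph{two} new probes per pair $(P,Q)$, one seeing $\famF_P\cup\famF_Q$ (an independent set, since root and inner shapes are disjoint) and one seeing the diagonal together with the seen set it does not cross, and argues by dichotomy. Writing $C_P$, $C_Q$ for the color sets forced on $\famF_P$ and $\famF_Q$ (each of size at least $k$), either $|C_P\cup C_Q|\geq k+1$ and the first probe already witnesses $k+1$ colors, or $C_P=C_Q$ with $|C_P|=k$, in which case the diagonal's color is forced outside $C_P=C_Q$ and the second probe witnesses $k+1$ colors. No coupling between copies is needed. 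Two further points: you must also verify the matching upper bound $\chi(\famF_k)=O(k)$ for the constructed family to get $\varTheta$ rather than $\Omega$ --- this follows from the recursion itself, since the copies of $\famF_k$ and the inserted diagonals are pairwise disjoint, giving $\chi(\famF_{k+1})\leq\chi(\famF_k)+1$; invoking the $O(\log\log n)$ upper bound of the present paper for this purpose is unnecessary (and anachronistic, since \cite{PKK+13} predates it). And the geometric realizability of the probes as regions that a fresh L-shape (or the next level's inner copy and diagonal) can enter while crossing exactly the prescribed independent set is a real verification, but it is routine once the combinatorial step is stated correctly.
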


By contrast, various classes of geometric intersection graphs are \emph{$\chi$-bounded}, which means that graphs of these classes have chromatic number bounded by some function of the clique number.
So are, for instance, the classes of rectangle graphs (intersection graphs of axis-parallel rectangles) \cite{AG60} and circle graphs (intersection graphs of chords of a circle) \cite{Gya85}.
The latter is a subclass of the class of infinite-L-graphs (intersection graphs of L-shapes whose vertical parts are upward-infinite), proved to be $\chi$-bounded by McGuinness \cite{McG96}.
A very general result extending this and several later ones \cite{LMPW14,McG00,RW19b,Suk14} was proved by Rok and Walczak \cite{RW19a}: the class of intersection graphs of curves each crossing a fixed curve at least once and at most $t$ times is $\chi$-bounded.
See Scott and Seymour's excellent survey \cite{SS-arxiv} for many more recent results on $\chi$-boundedness.

Key insights that led to the construction in Theorem~\ref{thm:pawlik} came from analyzing \emph{directed frame graphs}, that is, intersection graphs of \emph{frames} (boundaries of axis-parallel rectangles) whose top sides are free of intersections with other frames.
They form a subclass of the L-graphs.
The construction in Theorem~\ref{thm:pawlik} actually provides triangle-free directed frame graphs with $\chi=\varTheta(\log\log n)$.
A more general construction in \cite{KW17} produces string graphs with chromatic number $\varTheta_\omega((\log\log n)^{\omega-1})$, which however are not representable as frame graphs, L-graphs, or segment graphs.
It is natural to ask whether the bounds on $\chi$ with respect to $n$ achieved by these constructions are (close to) optimal.
This is true for triangle-free directed frame graphs.

\begin{theorem}[Krawczyk et~al.\ \cite{KPW15}]
\label{thm:frames}
Triangle-free directed frame graphs satisfy\/ $\chi=O(\log\log n)$.
\end{theorem}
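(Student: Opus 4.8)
The plan is to prove the bound through a recursive coloring scheme whose recursion \emph{depth}, rather than the number of colors spent at each level, carries the double logarithm. Concretely, I would aim for an inequality of the shape $\chi(\famF)\le\chi(\famF')+O(1)$, where $\famF'$ is a subfamily of (or a family derived from) $\famF$ of size roughly $\sqrt{|\famF|}$, or more generally $|\famF|^{c}$ for some fixed $c<1$. Iterating the map $n\mapsto n^{c}$ reaches a constant after $O(\log\log n)$ steps, so this both yields the claimed bound and explains its tightness: the construction behind Theorem~\ref{thm:pawlik} essentially squares the vertex count per unit of chromatic number, so any matching upper bound must show that raising $\chi$ by a constant forces squaring $n$. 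The entire difficulty is thus concentrated in realizing one such step: a merely additive separator giving $\chi(\famF)\le\chi(\famF')+O(1)$ with $|\famF'|\approx|\famF|/2$ would only reproduce the previously known $O(\log n)$ bound.

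First I would pin down the intersection structure forced by directedness. Writing each frame as the boundary of an axis-parallel box $[l_F,r_F]\times[b_F,t_F]$, the hypothesis that the top side $\{y=t_F\}$ meets no other frame rules out the symmetric ``plus-sign'' crossing of two frames (which would route a vertical side across some top side) and leaves only configurations in which one frame enters another through its left, right, or bottom side. Combined with triangle-freeness---so that the frames piercing any fixed frame form an independent set and are therefore pairwise non-crossing---this produces a laminar family among the piercings together with a clean partial order by vertical nesting. After a preliminary normalization into general position, which costs no colors, I would record this regularity as a Condition on the family so that the inductive hypothesis can be stated cleanly.

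The core of the argument is the Reduction to that Condition, isolated as a Claim. I would sweep a vertical line across the plane and group the frames into blocks according to a suitable parameter---for instance the nesting depth of their bottom sides in the laminar order, or the cell of a coarsening of the $x$-coordinates---chosen so that (i) frames in non-consecutive blocks do not interact, so that inter-block conflicts are absorbed by a bounded palette reused periodically, and (ii) the interaction inside each single block is again a triangle-free directed frame graph satisfying the Condition. The place where the power-of-$n$ gain must materialize is a dichotomy: by a double-counting or potential argument one shows that either some single block already carries almost all of the chromatic responsibility on far fewer than $n$ frames, or the number of ``active'' blocks is small, in which case the blocks recombine into one derived instance of size about $\sqrt n$ on which the induction is applied. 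Making this dichotomy deliver a genuine $\sqrt n$ (equivalently, shaving $\log n$ down by a constant factor) rather than $n/2$ is exactly where triangle-freeness and the laminar piercing structure must be used at full strength; this is the main obstacle and the technical heart of the proof.

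With the reduction step in hand, the theorem follows by induction on $n$. The base case is a family small enough to color with $O(1)$ colors, and each application of the Claim peels off one layer, spending $O(1)$ colors and replacing $n$ by $O(\sqrt n)$. Since the number of layers is $O(\log\log n)$, the total number of colors is $O(\log\log n)$, as required, and tracking the constants through the single reduction step produces an explicit multiplicative constant in the final bound.
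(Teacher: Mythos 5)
This statement is not proved in the paper at all: Theorem~\ref{thm:frames} is imported verbatim from Krawczyk, Pawlik, and Walczak \cite{KPW15} and is used here only as a black box (via Lemma~\ref{lem:frames}) to finish the proof of the Key Lemma. So there is no in-paper argument to compare yours against, and your submission has to stand on its own as a proof of the cited theorem. It does not. What you have written is a proof \emph{plan} organized around the template $\chi(\famF)\le\chi(\famF')+O(1)$ with $|\famF'|\approx|\famF|^{c}$, $c<1$; the template is indeed the right shape to produce a $\log\log n$ bound, but the one step that would make it work --- the dichotomy in your third paragraph, where either a single block of far fewer than $n$ frames carries essentially all of the chromatic number or the active blocks recombine into an instance of size about $\sqrt n$ --- is never established. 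You say yourself that this is ``the main obstacle and the technical heart of the proof.'' Everything before and after that sentence is routine; with that step missing, the argument proves nothing beyond the already-known $O(\log n)$ bound that an additive separator of size $n/2$ would give. There are also smaller structural overclaims: disjointness of the frames piercing a fixed frame does not by itself make them a laminar family (two disjoint frames can have side-by-side, overlapping, or nested bounding boxes), so the ``clean partial order by vertical nesting'' you want to induct on is not yet available.

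For what it is worth, the actual proof in \cite{KPW15} does not follow your $n\mapsto\sqrt n$ shrinking recursion. It decomposes a triangle-free rectangle overlap (equivalently, directed frame) graph into a bounded number of structured pieces and then reduces the coloring of each piece to an \emph{on-line} coloring problem for interval overlap graphs presented in a restricted order; the $O(\log\log n)$ arises from the analysis of the on-line algorithm, not from halving the exponent of $n$ at each level. (This on-line viewpoint is exactly what is generalized in \cite{KW17}.) If you want to pursue your route instead, the burden is to actually prove the $|\famF|^{c}$ dichotomy; as written, the proposal identifies where the difficulty lives but does not resolve it, so it is not a proof.
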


\noindent
The same bound also holds for general triangle-free frame graphs (not necessarily directed) \cite{KPW15}.

The contribution of this paper is the following generalization of Theorem~\ref{thm:frames}.

\begin{theorem}
\label{thm:main}
Triangle-free L-graphs satisfy\/ $\chi=O(\log\log n)$.
\end{theorem}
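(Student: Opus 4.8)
The plan is to prove the bound by strong induction on $n=|\famF|$, establishing $\chi(\famF)\le c\log\log n+c'$ for suitable constants through a single decomposition step of the form $\chi(n)\le\chi(\sqrt n)+O(1)$. Such a recursion solves to $O(\log\log n)$: writing $n=2^{2^m}$, one step replaces $m$ by $m-1$ at additive cost $O(1)$, and $m=\log\log n$. By contrast, merely halving $n$ at each step — which is what a single separating line achieves — yields only the McGuinness bound $O(\log n)$; the whole point is to shrink the residual family all the way down to $\sqrt n$. Before starting I would normalize: assume the L-shapes are in general position and that $\famF$ induces a connected graph, so that a coherent geometric decomposition is available.

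The per-step coloring engine is a $\chi$-boundedness input applied to shapes meeting a fixed curve. Fix an $x$- and $y$-monotone staircase $\gamma$. A horizontal segment meets $\gamma$ at most once and a vertical segment meets $\gamma$ at most once, so every L-shape crosses $\gamma$ at most twice; hence the subfamily of L-shapes that cross $\gamma$ consists of curves each crossing the fixed curve $\gamma$ at least once and at most twice, and by the Rok--Walczak theorem \cite{RW19a} (the case $t=2$) this subfamily is $\chi$-bounded, thus $O(1)$-colorable since $\famF$ is triangle-free. Moreover $\gamma$ separates the plane into two regions, and any two L-shapes lying strictly on opposite sides of $\gamma$ are disjoint, since an intersection point would have to lie on $\gamma$; so L-shapes confined to opposite sides can draw colors from a common palette without conflict. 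This gives the template for one step: delete an $O(1)$-colored family of shapes crossing a chosen separator, and recurse on the shapes confined to each side, reusing colors across sides.

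The core of the argument — and the step I expect to be the main obstacle — is to choose the separator(s) so that the residual pieces each have at most $\sqrt n$ shapes rather than a constant fraction of $n$, while the deleted family crossing them is still crossed $O(1)$ times by a single fixed curve, so that Rok--Walczak still yields $O(1)$ colors. A naive use of $\sqrt n$ vertical lines does make each slab contain at most $\sqrt n$ corners, but then a long shape may cross $\sqrt n$ lines at once, so the deleted family is no longer $O(1)$-colorable. Overcoming this requires the Krawczyk--Pawlik--Walczak style structural analysis behind Theorem~\ref{thm:frames}: one builds a hierarchy on $\famF$ in which forcing one additional color costs a power in the number of shapes — matching the doubly-exponential growth of the Pawlik et~al.\ lower bound, Theorem~\ref{thm:pawlik} — so that from a large-$\chi$ family one extracts a configuration certifying $n\ge n_{\mathrm{prev}}^{1+\Omega(1)}$.

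The genuinely new difficulty, relative to directed frames, is that an L-shape is an \emph{open} curve built from a corner and two rays, whereas a frame is a \emph{closed} Jordan curve. The frame proof can use enclosure — one frame lying inside another — as the backbone of its hierarchy and its counting; for L-shapes no such interior exists, and, as a sanity check, one cannot simply split each L into its two arms, since treating the arms as independent objects makes the resulting graph bipartite and destroys all chromatic complexity, so the two arms must be kept coupled at the corner throughout. I would therefore replace enclosure by a relation tailored to the axis-parallel L-geometry — for instance based on the relative position of corners together with which arm of which shape realizes each crossing — and re-prove, using triangle-freeness to limit how the two arms of distinct shapes can interleave, that deep chains in this relation force the shape count to grow polynomially per level. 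Assembling this with the staircase engine of the second paragraph then yields the $\sqrt n$ recursion, and hence the $O(\log\log n)$ bound.
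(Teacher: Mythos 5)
There is a genuine gap. Your third and fourth paragraphs explicitly defer the entire combinatorial core of the argument: you state that the separators must be chosen so that each residual piece has at most $\sqrt n$ shapes while the deleted family remains $O(1)$-colorable, you correctly observe that $\sqrt n$ vertical lines fail, and then you say you ``would'' replace the enclosure relation of the frame-graph proof by some corner-based relation and ``re-prove'' that deep chains force polynomial growth per level. No such relation is defined, and no argument is given that triangle-freeness controls how the two arms of distinct L-shapes interleave in the way you need. This is not a routine verification to be filled in later --- it is exactly the step where all known proofs of double-logarithmic bounds for geometric intersection graphs concentrate their difficulty, and your own text identifies it as ``the main obstacle.'' The surrounding scaffolding (the $\chi(n)\le\chi(\sqrt n)+O(1)$ recursion template, the staircase separator with the Rok--Walczak $t=2$ theorem, the disjointness of shapes on opposite sides) is sound but easy; without the hierarchy it proves nothing beyond $O(\log n)$.

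It is also worth noting that the paper avoids this difficulty entirely rather than solving it. It never rebuilds the Krawczyk--Pawlik--Walczak machinery for L-shapes. Instead it uses McGuinness's radius-two lemma (Lemma~\ref{lem:radius2}) to pass to the second neighborhood of a single L-shape, splits that neighborhood into six subfamilies according to how they meet the supports of the chosen L-shape, and shows via the Key Lemma (Lemma~\ref{lem:key}) --- a chain of geometric surgeries eliminating configurations (a)--(c) and (e)--(g) of Figure~\ref{fig:conf} --- that four of the six subfamilies reduce to triangle-free \emph{directed frame graphs}, so the $O(\log\log n)$ bound is imported as a black box from Theorem~\ref{thm:frames}. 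The remaining two subfamilies are handled by iterating only three times and applying a pigeonhole argument, after which the residual family becomes an $LR$-family of even-curves with bounded chromatic number (Lemma~\ref{lem:even}). If you want to salvage your approach, you would essentially have to reprove Theorem~\ref{thm:frames} in the L-shape setting, which is a substantially harder project than the reduction the paper carries out.
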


\noindent
It shows that the construction witnessing Theorem~\ref{thm:pawlik} is asymptotically optimal also for triangle-free L-graphs.
The previous best bound was $\chi=O(\log n)$; it follows from the above-mentioned result of McGuinness \cite{McG96} on infinite-L-graphs by ``divide-and-conquer'', and it holds also for L-graphs with clique number bounded by any constant.
When the clique number is $\omega$, the best known upper bounds on the chromatic number are $O((\log\log n)^{\omega-1})$ for frame graphs \cite{KW17}, $O(\log n)$ for segment graphs \cite{Suk14}, and $(\log n)^{\smash{O(\log\omega)}}$ for string graphs in general \cite{FP14}.
It remains open whether a double-logarithmic upper bound on the chromatic number holds, for instance, for triangle-free segment graphs or for L-graphs with bounded clique number.

\section{Preliminaries}

Graph-theoretic terms like \emph{chromatic number} and \emph{triangle-free} applied directly to a family of curves $\famF$ have the same meaning as when applied to the intersection graph of $\famF$.
A family of curves $\famF$ is \emph{$1$-intersecting} if any two curves in $\famF$ have at most one common point.
For $c\in\famF$, let $\famF(c)$ denote the family of curves at distance exactly $2$ from $c$ in the intersection graph of $\famF$.

\begin{lemma}[McGuinness {\cite[Theorem~5.3]{McG01}}]
\label{lem:radius2}
There is a constant\/ $\alpha>0$ such that every triangle-free\/ $1$-intersecting family of curves\/ $\famF$ satisfies\/ $\chi(\famF)\leq\alpha\max_{c\in\famF}\chi(\famF(c))$.
\end{lemma}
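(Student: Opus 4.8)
The plan is to run a breadth-first search from a single curve and to show that, once the BFS layers are separated by parity, all remaining conflicts live inside individual layers, each of which can be colored with $O(k)$ colors, where $k=\max_{c\in\famF}\chi(\famF(c))$. First I would reduce to the case that $\famF$ is finite and connected: by compactness it suffices to color every finite subfamily, components can be handled separately, and passing to a subfamily $\famF'$ only shrinks the distance-$2$ neighborhoods (a curve at distance exactly $2$ from $c$ in $\famF'$ is still at distance exactly $2$ in $\famF$, so $\famF'(c)\subseteq\famF(c)$ and $\chi(\famF'(c))\le k$). Fix a root curve $r$ and let $L_0,L_1,\dots$ be the BFS layers of the intersection graph $G$ of $\famF$; every edge of $G$ joins two curves in one layer or in two consecutive layers. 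Coloring each curve $v$ by the pair $(\operatorname{dist}(r,v)\bmod 2,\,c(v))$, where $c$ properly colors the within-layer conflicts from a single shared palette, is proper: consecutive-layer edges are separated by the parity coordinate, within-layer edges by $c$, and since layers of equal parity are pairwise non-adjacent one palette of size $\max_i\chi(G[L_i])$ suffices. Hence $\chi(\famF)\le 2\max_i\chi(G[L_i])$, and it remains to color one layer $L_i$ with $O(k)$ colors.

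For a single layer I would choose, for each $v\in L_i$, a parent $p(v)\in L_{i-1}$ adjacent to $v$; triangle-freeness then does the structural work. The neighborhood of any curve is independent, so two curves sharing a parent are non-adjacent, and therefore every edge of $G[L_i]$ joins children of two \emph{distinct} parents $w=p(u)$ and $w'=p(v)$. Applying triangle-freeness once more, $w\not\sim v$ and $w'\not\sim u$, so that $v\in\famF(w)$ and $u\in\famF(w')$: the witnesses for same-layer adjacencies always land in the distance-$2$ neighborhood of a parent. Now form the parent conflict graph $H$ on $L_{i-1}$, joining $w$ to $w'$ whenever some child of $w$ is adjacent to some child of $w'$. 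Coloring each $v\in L_i$ by the $H$-color of $p(v)$ is a proper coloring of $G[L_i]$, because adjacent children have distinct, $H$-adjacent parents; hence $\chi(G[L_i])\le\chi(H)$, and the whole problem reduces to bounding $\chi(H)$ by $O(k)$.

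Bounding $\chi(H)$ is the step I expect to be the real obstacle, and it is where the hitherto unused hypotheses — that the family is $1$-intersecting and drawn in the plane — must enter. The combinatorial reductions above are blind to geometry and cannot suffice on their own: an edge of $H$ may join curves at graph-distance $3$ in $\famF$, so no single neighborhood $\famF(w)$ contains such a conflict, and the bound $\chi(\famF(w))\le k$ has no purely combinatorial grip on $H$. The planar topology is what rescues this. Because the family is $1$-intersecting, the curves crossing a fixed curve $w$ meet it in distinct points and are thereby linearly ordered along $w$, while the witnesses for the $H$-edges incident to $w$ all lie inside $\famF(w)$. I would exploit this order through a sweeping/potential argument in the style of McGuinness: process the parents along the induced orders, charge each conflict to a position in some $\famF(w)$, and use the crossing constraints of planar $1$-intersecting curves to confine the conflicts to an $O(k)$-colorable structure. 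Carrying this last step out rigorously — converting the linear orders together with the distance-$2$ colorings into an $O(k)$ bound on $\chi(H)$ — is the crux; combined with the layer reduction it yields $\chi(\famF)\le 2\chi(H)=O(k)$ and so produces the constant $\alpha$.
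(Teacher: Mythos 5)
This lemma is not proved in the paper at all: it is imported verbatim from McGuinness \cite[Theorem~5.3]{McG01}, so there is no in-paper argument to compare yours against. Judged on its own terms, your proposal has a genuine gap, and it sits exactly where you say it does. The BFS layering, the parity trick giving $\chi(\famF)\le 2\max_i\chi(G[L_i])$, and the reduction of each layer to the parent conflict graph $H$ via triangle-freeness are all correct, but they are also purely graph-theoretic, and the statement is \emph{false} for abstract triangle-free graphs: in a graph of girth at least $6$, every set $\famF(c)$ of vertices at distance exactly $2$ from $c$ is independent (an edge inside it would close a triangle or a $5$-cycle), so $\max_c\chi(\famF(c))=1$, yet such graphs exist with arbitrarily large chromatic number. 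Consequently no chain of combinatorial reductions of the kind you carry out can terminate in a bound; the geometric hypotheses must do quantitative work, and your final paragraph, which is the only place they appear, is a description of a hoped-for argument rather than an argument. In particular, the observation that an edge of $H$ joins parents $w,w'$ that may be at distance $3$, so that neither $\famF(w)$ nor $\famF(w')$ contains the other endpoint, is precisely the obstruction that the linear order of crossings along a $1$-intersecting curve is supposed to overcome, and you do not say how: there is no specification of which curve is swept, what invariant the sweep maintains, or how a conflict in $H$ gets charged to a colored element of some $\famF(w)$. That step is the entire content of McGuinness's theorem (his proof occupies most of a paper and proceeds through a sequence of geometric lemmas about how curves meeting a common curve can interlace), so the proposal should be regarded as a correct but routine preprocessing of the problem followed by an unproved core claim.
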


\noindent
The statement of Lemma~\ref{lem:radius2} can be generalized to families of curves with unrestricted intersections and with clique number bounded by any constant (which $\alpha$ depends on); see \cite{CSS-arxiv}.

A point $p$ lies \emph{above}/\emph{below}/\emph{to the left}/\emph{right of} a plane set $s$ (an L-shape, a segment, a line, etc.)\ if $p\notin s$ and the ray emanating from $p$ downwards/upwards/rightwards/leftwards intersects~$s$.
A~plane set $r$ lies \emph{above}/\emph{below}/\emph{to the left}/\emph{right of} a plane set $s$ if so does every point of $r$.

Let $h$ be a horizontal line.
A curve $c$ is \emph{grounded to} $h$ when one endpoint of $c$ lies on $h$ and the remaining part of $c$ lies above $h$.

\begin{lemma}[McGuinness \cite{McG00}]
\label{lem:grounded}
Triangle-free\/ $1$-intersecting families of curves grounded to a fixed horizontal line\/ $h$ have bounded chromatic number.
\end{lemma}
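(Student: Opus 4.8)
The plan is to exploit the linear order of the groundpoints on $h$ and convert the geometric grounding into a combinatorial nesting structure, against which a coloring can be built. Write $p(c)$ for the groundpoint of a curve $c$, order the family so that the groundpoints increase from left to right, and call $c_i$ a \emph{left-neighbor} of $c_j$ when $c_i$ and $c_j$ cross and $p(c_i)<p(c_j)$. The first thing I would record is the enclosure produced by a crossing pair: if $c_i$ is a left-neighbor of $c_j$, meeting it at a point $x$, then the sub-arc of $c_i$ from $p(c_i)$ to $x$, the sub-arc of $c_j$ from $x$ to $p(c_j)$, and the segment of $h$ between the two groundpoints bound a Jordan region $R_{ij}$ lying in the upper half-plane. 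The $1$-intersecting hypothesis is exactly what makes these regions well behaved: any other curve entering $R_{ij}$ through its grounded part (that is, with groundpoint in the open interval $(p(c_i),p(c_j))$) can leave only across $c_i$ or $c_j$, and at most once each. This is the mechanism by which grounding forces adjacencies to line up along~$h$.

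Next I would use triangle-freeness to tame the left-neighborhoods. For a fixed curve $c$ the neighbors form an independent set, so the left-neighbors of $c$ are pairwise \emph{non-crossing} grounded curves, each meeting $c$ exactly once. I would first check that non-crossing grounded curves admit a canonical left-to-right linear order refining the order of their groundpoints, and then argue that, read along $c$ starting from $p(c)$, the crossing points of the left-neighbors of $c$ occur in a controlled, nested order with respect to that linear order. Combined with the enclosure regions above, this should yield a laminar (nested-or-disjoint) family of regions attached to the crossings, and hence a tree-like skeleton on the curves in which each curve hangs off an enclosing one. The point of this step is to replace the arbitrary crossing pattern---which can be as dense as a complete bipartite graph $K_{m,m}$, so that neither the degeneracy nor the arboricity of these graphs is bounded---by a skeleton a coloring can follow.

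The coloring itself I would construct by induction, repeatedly deleting an outermost independent layer: the curves whose enclosure regions are maximal, which by the laminar structure together with triangle-freeness can be arranged to form an independent set whose removal leaves a grounded family of the same kind. The genuinely delicate point, and the one I expect to be the main obstacle, is bounding the \emph{number} of such layers by an absolute constant; a naive peeling could recurse as many times as the nesting is deep, and deep nesting certainly does occur. Here triangle-freeness must be used a second time and quantitatively: I expect to show that a curve reaching far into a deeply nested tower would, together with two consecutive curves of that tower, either create a forbidden triangle or violate the $1$-intersecting condition, so that the depth any single curve can interact with is bounded, and hence so is the number of colors. Making this interaction bound precise---pinning down how the grounding plus the forbidden triangle cap the depth, and therefore the chromatic number, at an absolute constant---is where the real work lies; everything preceding it is bookkeeping about the planar topology of grounded arcs.
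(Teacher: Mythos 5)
This lemma is not proved in the paper at all: it is imported verbatim from McGuinness \cite{McG00}, so the relevant comparison is with that external proof, which is a substantial, multi-page argument --- not a short peeling scheme. Your proposal is an outline whose central step is missing, and you say so yourself: ``bounding the number of such layers by an absolute constant \dots\ is where the real work lies.'' Everything before that point is setup; the theorem is exactly the assertion that this count is bounded, so as a proof the proposal does not get off the ground.

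Beyond the admitted gap, two of the intermediate claims are unjustified and, as stated, false. First, the enclosure regions $R_{ij}$ do not form a laminar family: the four curves bounding two such regions may pairwise cross (once each, which the $1$-intersecting hypothesis permits), producing regions that partially overlap rather than nest or avoid each other, so the ``tree-like skeleton'' does not exist in general. Second, ``depth of nesting'' is not a parameter that can control the chromatic number in either direction: a tower of pairwise disjoint nested curves has unbounded depth and chromatic number $1$, while the graphs in question contain arbitrarily large complete bipartite subgraphs (as you note), so no argument that bounds only how many ``levels'' a single curve meets can by itself yield a proper coloring with few colors. McGuinness's actual argument (and the later generalization in \cite{RW19b}) proceeds quite differently, by an induction on the chromatic number of carefully chosen subfamilies attached to ``supporting'' curves, extracting a forbidden configuration (a triangle or a double crossing) when the chromatic number is too large; none of that machinery is visible in your sketch. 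If you want to pursue this statement, I would start from \cite{McG00} or \cite{RW19b} rather than from the laminar-peeling picture.
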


\noindent
Again, the statement of Lemma~\ref{lem:grounded} can be generalized to families of grounded curves with unrestricted intersections and with clique number bounded by any constant \cite{RW19b}.

An \emph{$h$-even-curve} is a curve that starts above $h$ and crosses $h$ properly a positive even number of times, ending again above $h$.
The two parts of an $h$-even-curve $c$ from an endpoint to the first intersection point with $h$ are denoted by $L(c)$ and $R(c)$ so that the common point of $L(c)$ with $h$ is to the left of that of $R(c)$.
A family of $h$-even-curves is an \emph{$LR$-family} if every intersection between two of its members $c_1$ and $c_2$ is between $L(c_1)$ and $R(c_2)$ or vice versa.

\begin{lemma}[Rok, Walczak {\cite[Theorem~4]{RW19a}}]
\label{lem:even}
For a fixed horizontal line\/ $h$, triangle-free\/ $LR$-families of\/ $h$-even-curves have bounded chromatic number.
\end{lemma}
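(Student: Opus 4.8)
The plan is to combine the two black boxes already at hand: the radius-two reduction (Lemma~\ref{lem:radius2}, in its triangle-free incarnation) to pass from the whole family to a second neighborhood, and the grounded-curves bound (Lemma~\ref{lem:grounded}, likewise for triangle-free families) to finish. The grounded lemma cannot be applied to $\famF$ directly: although the pieces $\{L(c)\}$ and $\{R(c)\}$ are both grounded to $h$ and an $LR$-family has only $L$--$R$ crossings, the intersection graph on the pieces is bipartite and says nothing about the intersection graph on the curves themselves, since each curve is a matched $L$--$R$ pair, and contracting matched pairs in a bipartite graph can raise the chromatic number arbitrarily. This is exactly the point where a second-neighborhood argument is needed.

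So first I would invoke Lemma~\ref{lem:radius2}, in the bounded-clique form mentioned in the excerpt, to reduce the statement to a uniform constant bound on $\chi(\famF(c_0))$ for an arbitrary fixed $c_0\in\famF$; here one uses that a triangle-free family has clique number at most $2$, so the constant $\alpha$ is absolute. The gain is that triangle-freeness now forces the first neighborhood $N(c_0)$ to be an independent set of pairwise disjoint curves, each meeting $c_0$ only through $L$--$R$ contacts involving a leg $L(c_0)$ or $R(c_0)$, while every curve of $\famF(c_0)$ is disjoint from $c_0$ and reaches it only through such an intermediate neighbor.

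Next I would split $\famF(c_0)$ into a bounded number of subfamilies according to which leg of $c_0$ the intermediate neighbor meets and which legs carry the two $L$--$R$ contacts along a length-two path $c$--$c'$--$c_0$; this is a constant-size case distinction. For each subfamily the goal is to exhibit a grounding to one leg of $c_0$, say $R(c_0)$: straighten $R(c_0)$ to a horizontal line by a homeomorphism of the plane (which preserves all crossings), and argue that, after truncating each second-neighbor curve at a suitable contact point, the truncated curves become grounded to that line with their bodies confined to one side. Then Lemma~\ref{lem:grounded} gives a constant bound on each subfamily, and hence on $\chi(\famF(c_0))$, which by the first step bounds $\chi(\famF)$.

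The hard part is this last grounding step. A curve of $\famF(c_0)$ never touches $c_0$; it is tethered to $R(c_0)$ only through an intermediate neighbor $c'$, so the natural ``ground'' is not $R(c_0)$ alone but the connected comb formed by $c_0$ together with the independent family $N(c_0)$, which is a tree-like set rather than a line. The delicate work is to use the disjointness of the neighbors and the one-sidedness built into the $LR$-condition to cut this comb down to a single arc along which the second neighborhood is genuinely grounded, and to choose the truncation so that the relevant $L$--$R$ crossings survive while no spurious crossings are introduced. I expect the bulk of the proof, and all of its geometric content, to reside precisely in verifying that such a grounding exists.
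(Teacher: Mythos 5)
First, a point of order: this paper does not prove Lemma~\ref{lem:even} at all. It is imported as a black box from Theorem~4 of Rok and Walczak \cite{RW19a}, where its proof is the technical core of that paper --- a lengthy induction, not a short combination of the radius-two lemma and the grounded-curves lemma. So there is no internal proof to compare against; your proposal must stand on its own, and it has a genuine gap rather than a deferred verification.

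The gap is exactly the step you flag as ``the hard part,'' but it is not a detail to be checked --- it is the entire theorem, and the two natural ways to carry it out both fail. (1)~\emph{Extension.} A curve $c\in\famF(c_0)$ is disjoint from $c_0$, so to ground it to a straightened leg of $c_0$ you must route it along its intermediate neighbour $c'$ down to $c_0$. But arbitrarily many members of $\famF$ can cross $c'$ (they are pairwise disjoint by triangle-freeness, but their number is unbounded), and the extension of $c$ along $c'$ picks up crossings with every such curve lying between $c$'s attachment point and the ground. The resulting grounded family is neither $1$-intersecting nor of bounded clique number, so no version of Lemma~\ref{lem:grounded} applies to it; and since the added edges are not edges of the original intersection graph in any controlled pattern, there is no way to recover a bound on $\chi(\famF(c_0))$ from a colouring of the extended family without further structure you have not supplied. (2)~\emph{Truncation.} If instead you truncate each $c\in\famF(c_0)$ near its contact with $c'$, you may delete exactly the portion of $c$ that carries its $L$--$R$ crossings with other members of $\famF(c_0)$ --- those crossings can occur arbitrarily far from the comb $c_0\cup N(c_0)$ --- so the truncated family can be properly colourable with far fewer colours than $\famF(c_0)$ and yields no bound. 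There is no choice of grounding arc inside the comb that escapes both problems simultaneously; this is precisely why \cite{RW19a} needs a different and much longer argument. The first step of your plan (the radius-two reduction in its bounded-clique form, with $\omega\leq 2$) is sound, but everything after it amounts to restating the theorem as an expectation rather than proving it.
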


\noindent
Again, the statement of Lemma~\ref{lem:even} can be generalized to $LR$-families of $h$-even-curves with clique number bounded by any constant \cite[Theorem~4]{RW19a}.

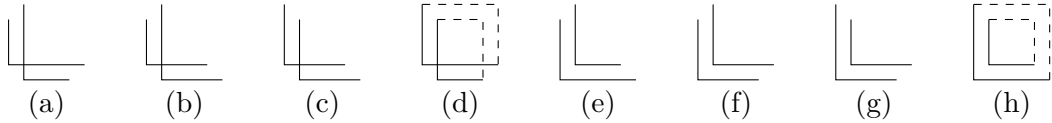
\begin{figure}[t]
\def\interskip{\hskip 0.8cm\relax}
\centering
\begin{tikzpicture}
  \draw (0,0.8)|-(1,0.2);
  \draw (0.2,1)|-(0.8,0);
  \node[below] at (0.5,0) {(a)};
\end{tikzpicture}\interskip
\begin{tikzpicture}
  \draw (0,0.8)|-(0.8,0.2);
  \draw (0.2,1)|-(1,0);
  \node[below] at (0.5,0) {(b)};
\end{tikzpicture}\interskip
\begin{tikzpicture}
  \draw (0,1)|-(0.8,0.2);
  \draw (0.2,0.8)|-(1,0);
  \node[below] at (0.5,0) {(c)};
\end{tikzpicture}\interskip
\begin{tikzpicture}
  \draw (0,1)|-(1,0.2);
  \draw (0.2,0.8)|-(0.8,0);
  \draw[dashed] (0,1)-|(1,0.2);
  \draw[dashed] (0.2,0.8)-|(0.8,0);
  \node[below] at (0.5,0) {(d)};
\end{tikzpicture}\interskip
\begin{tikzpicture}
  \draw (0,0.8)|-(1,0);
  \draw (0.2,1)|-(0.8,0.2);
  \node[below] at (0.5,0) {(e)};
\end{tikzpicture}\interskip
\begin{tikzpicture}
  \draw (0,0.8)|-(0.8,0);
  \draw (0.2,1)|-(1,0.2);
  \node[below] at (0.5,0) {(f)};
\end{tikzpicture}\interskip
\begin{tikzpicture}
  \draw (0,1)|-(0.8,0);
  \draw (0.2,0.8)|-(1,0.2);
  \node[below] at (0.5,0) {(g)};
\end{tikzpicture}\interskip
\begin{tikzpicture}
  \draw (0,1)|-(1,0);
  \draw (0.2,0.8)|-(0.8,0.2);
  \draw[dashed] (0,1)-|(1,0);
  \draw[dashed] (0.2,0.8)-|(0.8,0.2);
  \node[below] at (0.5,0) {(h)};
\end{tikzpicture}
\vspace*{-0.5ex}
\caption{Configurations of pairs L-shapes with intersecting bounding boxes;
the case of right endpoints with common $x$-coordinate is included in (b), (c), (e), (h), and the case of top endpoints with common $y$-coordinate in (c), (d), (g), (h)}
\label{fig:conf}
\end{figure}

Let $h(\ell)$ and $v(\ell)$ denote the horizontal and the vertical segment of an L-shape $\ell$, respectively.
Their intersection point is the \emph{corner} of $\ell$, and their other endpoints are the \emph{right} and the \emph{top endpoint} of $\ell$, respectively.
We will be assuming that the horizontal segments of all L-shapes that we consider have distinct $y$-coordinates and the vertical ones have distinct $x$-coordinates.

The \emph{bounding box} of an L-shape $\ell$ is the minimal axis-parallel rectangle containing $\ell$.
Figure~\ref{fig:conf} illustrates all possible configurations of pairs of L-shapes whose minimal bounding boxes intersect.
If a family of L-shapes $\famL$ contains no pairs in configurations (a)--(c) and (e)--(g), then completing the L-shapes in $\famL$ to frames (by taking the boundaries of their bounding boxes) and appropriate perturbation of collinear sides shows that the intersection graph of $\famL$ is a directed frame graph.
This leads to the following corollary to Theorem~\ref{thm:frames}.

\begin{lemma}
\label{lem:frames}
Triangle-free families of L-shapes with no pairs in configurations\/ \textup{(a)--(c)} and\/ \textup{(e)--(g)} satisfy\/ $\chi(\famL)=O(\log\log n)$.
\end{lemma}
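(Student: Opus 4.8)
The plan is to make the outline preceding the statement precise: complete every L-shape in $\famL$ to the frame bounding its bounding box, check that the resulting family of frames has the \emph{same} intersection graph as $\famL$ and is a \emph{directed} frame graph, and then invoke Theorem~\ref{thm:frames}. Since this transformation changes neither the number of shapes $n$ nor the intersection graph, and since both triangle-freeness and the chromatic number depend only on that graph, the bound $\chi(\famL)=O(\log\log n)$ follows immediately from the theorem.

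The heart of the argument is the pairwise case analysis encoded in Figure~\ref{fig:conf}. I would first record that a pair of L-shapes with disjoint bounding boxes intersect neither as L-shapes nor as frames, so such pairs are harmless. For pairs whose bounding boxes meet, the hypothesis forbids configurations (a)--(c) and (e)--(g), leaving only (d) and (h). For each of these two surviving configurations I would verify two facts directly from the picture. First, the two L-shapes intersect if and only if their frames do: in (d) the crossing lies on the left and bottom sides, which are common to the L-shapes and to the completing frames, so it persists; in (h) one bounding box is nested strictly inside the other, so neither the L-shapes nor the frames meet. Second, in neither configuration is an added top side crossed by the other frame. Together these give that the completion preserves the intersection graph and that every top side is free of intersections, i.e.\ the frame family is directed. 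This is exactly where the exclusion of (a)--(g) is used: in (a)--(c) the L-shapes intersect but the completion would run a vertical side of one frame across the top side of the other, destroying directedness, whereas in (e)--(g) the L-shapes are disjoint yet their frames cross, which would introduce a spurious edge.

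Two routine points remain. Triangle-freeness of the frame family is inherited since its intersection graph equals that of $\famL$. The genericity assumed for the L-shapes (distinct $x$-coordinates of vertical segments, distinct $y$-coordinates of horizontal ones) need not extend to the newly created top and right sides, so I would perturb any resulting collinear sides slightly, as indicated in the caption of Figure~\ref{fig:conf}, in a way that alters none of the pairwise intersection/non-intersection relations; this places the frames in general position without affecting the case analysis. Applying Theorem~\ref{thm:frames} to the resulting triangle-free directed frame graph then yields $\chi(\famL)=O(\log\log n)$. I expect the only delicate part to be the bookkeeping of the degenerate sub-cases with shared coordinates (folded by the caption into several configurations, including the surviving (d) and (h)) and the verification that the perturbation neither creates nor removes a top-side crossing; the geometry of the two surviving generic configurations is itself transparent.
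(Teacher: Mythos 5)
Your proposal is correct and follows exactly the route the paper takes: the paper's entire justification is the sentence preceding the lemma, asserting that completing the L-shapes to the boundaries of their bounding boxes (with a perturbation of collinear sides) yields a directed frame graph with the same intersection graph, to which Theorem~\ref{thm:frames} applies. Your case analysis of configurations (d) and (h) simply makes that one-line argument explicit.
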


\section{Key Lemma}

\begin{lemma}
\label{lem:key}
Let\/ $h$ be a horizontal line, $\famL$ be a family of L-shapes lying above\/ $h$, and\/ $\famS$ be a family of vertical segments starting at\/ $h$ and going upwards such that
\begin{itemize}
\item the horizontal segments of the L-shapes in\/ $\famL$ have distinct\/ $y$-coordinates,
\item the vertical segments in\/ $\{v(\ell)\colon\ell\in\famL\}\cup\famS$ have distinct\/ $x$-coordinates,
\item every L-shape in\/ $\famL$ intersects some vertical segment in\/ $\famS$,
\item the intersection graph of\/ $\famL\cup\famS$ is triangle-free.
\end{itemize}
Then\/ $\chi(\famL)=O(\log\log{|\famL|})$.
\end{lemma}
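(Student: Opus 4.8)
The plan is to treat the segments of $\famS$ as grounded ``probes'' and to exploit the triangle-freeness of $\famL\cup\famS$ to control the relative position of any two crossing L-shapes, and then to split the intersection graph of $\famL$ into a part meeting the hypothesis of Lemma~\ref{lem:frames} and a part of bounded chromatic number handled by Lemmas~\ref{lem:grounded} and~\ref{lem:even}. For each $\ell\in\famL$ I fix a witness $s_\ell\in\famS$ that $\ell$ crosses. Since the vertical segments have pairwise distinct $x$-coordinates, $s_\ell$ cannot meet $v(\ell)$, so it crosses $\ell$ along $h(\ell)$; its $x$-coordinate $\xi_\ell$ therefore lies strictly between those of the corner and the right endpoint of $\ell$, and $s_\ell$ rises above the height of $h(\ell)$ before descending to $h$. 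The crucial consequence of triangle-freeness is that whenever $\ell,\ell'\in\famL$ cross, $s_\ell$ misses $\ell'$ and $s_{\ell'}$ misses $\ell$: a probe may never stab the horizontal arm of its partner from above. As the probes are tall and grounded, this no-stabbing condition forces a definite horizontal ordering of the two arms relative to the crossing point, together with a height restriction on the partner's probe.

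Every edge of the intersection graph of $\famL$ is a crossing of some $h(\ell)$ with some $v(\ell')$, and I classify it by the position of that crossing relative to $\xi_\ell$ and by whether its height lies below the top of $s_{\ell'}$. This produces a bounded number of edge classes, and $\chi(\famL)$ is then at most the product of the chromatic numbers of the corresponding graphs. In the classes where the probe of the arm-owner lies on the far side of the crossing, the no-stabbing condition places the partner entirely to one side of that probe; replacing each $\ell$ by the curve formed from $v(\ell)$, the portion of $h(\ell)$ reaching $s_\ell$, and the descent of $s_\ell$ to $h$ turns these crossings into intersections of curves grounded to $h$, and the no-stabbing condition is what one invokes to keep the probe descents from producing spurious intersections, so that the resulting family is triangle-free and, after a harmless perturbation along shared segments, $1$-intersecting. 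Lemma~\ref{lem:grounded}, or Lemma~\ref{lem:even} applied to the associated $LR$-family, then bounds the chromatic number of these classes by $O(1)$. The remaining class is arranged so that no box-intersecting pair realizes a forbidden configuration, the crossing pairs being pinned to configuration (d) and the relevant non-crossing pairs to configuration (h); it is thus an honest family of L-shapes with no pair in configurations (a)--(c) or (e)--(g), and Lemma~\ref{lem:frames} bounds its chromatic number by $O(\log\log|\famL|)$. Multiplying the factors yields $\chi(\famL)=O(\log\log|\famL|)$.

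The heart of the argument, and the step I expect to be the main obstacle, is the geometric case analysis that distributes the configurations among these classes. A naive split by the side of the probe does not suffice, since each side still mixes one admissible and one forbidden crossing configuration; separating them forces one to combine the horizontal ordering with the height restriction on the partner's probe, and to check in every resulting case that the no-stabbing condition either grounds the crossing or pins the pair to configuration (d). The non-crossing configurations (e)--(g) are the most delicate point, because triangle-freeness constrains only crossing pairs and hence gives no direct leverage on them; excluding them from the frame class requires a separate argument through the probe heights, in effect a companion grounded family that absorbs these pairs, and this is where most of the work lies. What remains is routine but must be verified: that each grounded or $LR$ family produced is triangle-free and $1$-intersecting, respectively satisfies the $LR$ condition, so that the invoked lemmas genuinely apply.
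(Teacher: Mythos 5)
Your high-level plan --- use the supports to build grounded and $LR$-families that dispose of the crossing configurations from Figure~\ref{fig:conf}, and feed what is left to Lemma~\ref{lem:frames} --- is the same as the paper's, and the first half of your argument (probe-truncated curves grounded to $h$ handled by Lemma~\ref{lem:grounded}, crossings through the part of $h(\ell)$ beyond the probe handled by Lemma~\ref{lem:even}) matches the paper's first two reduction steps, which exclude configurations (b) and (c). But the proposal has a genuine gap exactly where you flag it: the non-crossing configurations (e)--(g). Your suggested mechanism, ``a companion grounded family that absorbs these pairs,'' cannot work as stated. A pair in configuration (e), (f), or (g) is a \emph{non-edge} of the intersection graph of $\famL$, so no proper coloring of $\famL$, nor of any auxiliary grounded or $LR$-family built from arcs of these L-shapes, is guaranteed to separate the two members of such a pair; to ``absorb'' them by coloring you would have to properly color the auxiliary graph whose edges are precisely the (e)--(g) pairs, and nothing in your setup bounds the chromatic number of that graph. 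Triangle-freeness, your no-stabbing condition, and the probe heights all constrain only intersecting pairs, so they give no handle on this auxiliary graph. This is not a routine verification left to the reader --- it is the core of the lemma.

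The paper resolves it by changing the geometry rather than by coloring. After the two coloring reductions, it performs cut-and-shift surgeries on the plane (Conditions~\ref{cond:below} and~\ref{cond:left1}) to normalize where corners and endpoints may lie relative to handles and to supports, discards a pairwise-disjoint subfamily to kill configuration (a) (Condition~\ref{cond:left2}), and only then eliminates (e), (f), and (g) by physically stretching the lower L-shape of each offending pair --- pulling its top endpoint up to the partner's top line, resp.\ its right endpoint out to the partner's rightmost support --- verifying with the previously established conditions that each stretch creates no new intersections, preserves triangle-freeness, and terminates (Conditions~\ref{cond:vertical} and~\ref{cond:horizontal}). None of this surgery, nor a workable substitute for it, appears in your proposal, so the reduction to Lemma~\ref{lem:frames} is not established.
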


\begin{proof}
We will reduce the problem to the case where configurations (a)--(c) and (e)--(g) from Figure~\ref{fig:conf} are excluded, in order to apply Lemma~\ref{lem:frames}.
The reduction will keep modifying $\famL$ and $\famS$ to make them satisfy more and more additional conditions.
We will present each step of the reduction by first formulating a new condition that $\famL$ and $\famS$ should satisfy and then explaining how to modify $\famL$ and $\famS$ to ensure that condition while preserving all previous conditions and \emph{changing\/ $\chi(\famL)$ by at most a constant factor}.
The latter guarantees that the bound $\chi(\famL)=O(\log\log{|\famL|})$ after the reduction implies the same bound for the original family $\famL$.

The \emph{leftmost}/\emph{rightmost support} of an L-shape $\ell\in\famL$ is the vertical segment in $\famS$ with minimum/maximum $x$-coordinate among all vertical segments in $\famS$ that intersect $\ell$.
The \emph{handle} of an L-shape $\ell\in\famL$ is the part of $\ell$ to the left of the vertical line containing the leftmost support of $\ell$.
The \emph{hook} of an L-shape $\ell\in\famL$ is the part of $\ell$ to the right of the rightmost support of $\ell$.

\begin{condition}
\label{cond:handle}
No two handles of L-shapes in $\famL$ intersect.
\end{condition}

\begin{reduction}
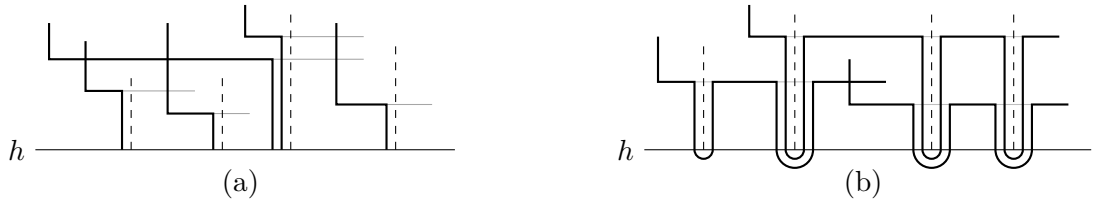
\begin{figure}[t]
\centering
\begin{tikzpicture}[scale=0.6]
  \draw (0.4,0)--(9.6,0);
  \draw[black!35] (1.5,1.3)--(3.9,1.3);
  \draw[black!35] (4.3,0.8)--(5.1,0.8);
  \draw[black!35] (5.6,2)--(7.6,2);
  \draw[black!35] (5.8,2.5)--(7.6,2.5);
  \draw[black!35] (8.1,1)--(9.1,1);
  \draw[thick] (1.5,2.4)--(1.5,1.3)--(2.3,1.3)--(2.3,0);
  \draw[thick] (3.3,2.8)--(3.3,0.8)--(4.3,0.8)--(4.3,0);
  \draw[thick] (0.7,2.8)--(0.7,2)--(5.6,2)--(5.6,0);
  \draw[thick] (5,3.2)--(5,2.5)--(5.8,2.5)--(5.8,0);
  \draw[thick] (7,2.8)--(7,1)--(8.1,1)--(8.1,0);
  \draw[dashed] (2.5,0)--(2.5,1.6);
  \draw[dashed] (4.5,0)--(4.5,1.6);
  \draw[dashed] (6,0)--(6,3);
  \draw[dashed] (8.3,0)--(8.3,2.3);
  \node[left] at (0.4,0) {$h$};
  \node[below] at (4.9,-0.2) {(a)};
\end{tikzpicture}\hskip 2cm
\begin{tikzpicture}[scale=0.6]
  \draw (0.2,0)--(10,0);
  \draw[black!25] (1.3,1.5)--(1.7,1.5) (3.1,1.5)--(3.9,1.5);
  \draw[black!25] (6.1,1)--(6.9,1) (7.9,1)--(8.7,1);
  \draw[black!25] (3.3,2.5)--(3.7,2.5) (6.3,2.5)--(6.7,2.5) (8.1,2.5)--(8.5,2.5);
  \draw[thick] (0.5,2.5)--(0.5,1.5)--(1.3,1.5)--(1.3,0) arc (-180:0:2mm)--(1.7,1.5)--(3.1,1.5)--(3.1,0) arc (-180:0:4mm)--(3.9,1.5)--(5.5,1.5);
  \draw[thick] (4.7,2)--(4.7,1)--(6.1,1)--(6.1,0) arc (-180:0:4mm)--(6.9,1)--(7.9,1)--(7.9,0) arc (-180:0:4mm)--(8.7,1)--(9.5,1);
  \draw[thick] (2.5,3.2)--(2.5,2.5)--(3.3,2.5)--(3.3,0) arc (-180:0:2mm)--(3.7,2.5)--(6.3,2.5)--(6.3,0) arc (-180:0:2mm)--(6.7,2.5)--(8.1,2.5)--(8.1,0) arc (-180:0:2mm)--(8.5,2.5)--(9.3,2.5);
  \draw[dashed] (1.5,0)--(1.5,2.3);
  \draw[dashed] (3.5,0)--(3.5,3);
  \draw[dashed] (6.5,0)--(6.5,3);
  \draw[dashed] (8.3,0)--(8.3,3);
  \node[left] at (0.2,0) {$h$};
  \node[below] at (5,-0.2) {(b)};
\end{tikzpicture}
\vspace*{-2ex}
\caption{Transformations of L-shapes to curves considered in the proof}
\label{fig:auxiliary}
\end{figure}

The family $\famH$ of handles of the L-shapes in $\famL$ can be transformed into a $1$-intersecting family of curves grounded to $h$ with the same intersection graph by connecting them to $h$ along the leftmost supports, as illustrated in Figure \ref{fig:auxiliary}~(a).
Therefore, by Lemma~\ref{lem:grounded}, $\chi(\famH)$ is bounded.
Let $\famL_c$ denote the L-shapes in $\famL$ whose handles have color $c$ in an optimal proper coloring of $\famH$.
Then $\chi(\famL)\leq\sum_c\chi(\famL_c)\leq\chi(\famH)\max_c\chi(\famL_c)=O(\max_c\chi(\famL_c))$.
We set $\famL:=\famL_c$ for the color $c$ that maximizes $\chi(\famL_c)$, and Condition~\ref{cond:handle} follows.
\end{reduction}

\begin{condition}
\label{cond:hook}
All of the L-shapes in $\famL$ have empty hooks.
Consequently, no two L-shapes in $\famL$ occur in configuration (b) or (c) from Figure~\ref{fig:conf}.
\end{condition}

\begin{reduction}
For $\ell\in\famL$, let $\ell'$ be the L-shape obtained from $\ell$ by cutting the hook off, and let $\famL'=\{\ell'\colon\ell\in\famL\}$.
Let $\famL_c$ be the L-shapes $\ell\in\famL$ such that $\ell'$ has color $c$ in an optimal proper coloring of $\famL'$.
Since every crossing between two L-shapes in $\famL_c$ is a crossing between the vertical part of one and the hook of the other, the family $\famL_c$ can be transformed into an $LR$-family of $h$-even-curves with the same intersection graph, as illustrated in Figure \ref{fig:auxiliary}~(b).
Therefore, by Lemma~\ref{lem:even}, $\chi(\famL_c)$ is bounded.
This yields $\chi(\famL)\leq\sum_c\chi(\famL_c)\leq\chi(\famL')\max_c\chi(\famL_c)=O(\chi(\famL'))$.
We set $\famL:=\famL'$, and Condition~\ref{cond:hook} follows: if two L-shapes $\ell_1,\ell_2\in\famL'$ occurred in configuration (b) or (c) from Figure~\ref{fig:conf}, they would form a triangle with the rightmost support of either $\ell_1$ or $\ell_2$.
\end{reduction}

\begin{condition}
\label{cond:below}
No corner or endpoint of an L-shape in $\famL$ lies below the handle of an L-shape in $\famL$.
\end{condition}

\begin{reduction}
\begin{figure}[t]
\begin{tikzpicture}[scale=.7]
  \draw[dotted] (-2,1.5)--(-2,1);
  \draw[dotted] (0,-2.45)--(0,-2);
  \draw[thick] (-2,1)|-(1,0);
  \node[fill=white,inner sep=1pt] at (-1.3,0) {$\ell$};
  \draw (-1.8,-0.3)|-(1,-0.9);
  \draw (-3.3,0.1)|-(-1.5,-0.5);
  \draw (-2.5,1)|-(-1,-1.3);
  \draw (-0.7,-0.5)|-(-0.2,-1.5);
  \draw (-3.2,1.2)|-(-0.5,0.7);
  \draw (-1,1.2)|-(1,0.4);
  \draw (0.7,0.9)|-(1.3,-1.5);
  \draw[dashed] (-2.9,-2)--(-2.9,1.2);
  \draw[dashed] (-1.3,-2)--(-1.3,-0.6);
  \draw[dashed] (-0.45,-2)--(-0.45,-1.2);
  \draw[dashed] (0,-2)--(0,1);
  \draw[dashed] (1,-2)--(1,-1.2);
  \draw (-3.5,-2) node[left]{$h$}--(1.5,-2);
  \draw[gray,ultra thick,->] (2.8,-0.495)--(4,-0.495);
  \fill[black!10] (7,1.5)--(7,0)--(9,0)--(9,-2.45)--(11.5,-2.45)--(11.5,0)--(9.5,0)--(9.5,1.5)--cycle;
  \draw[dotted] (7,1.5)--(7,0)--(9,0)--(9,-2);
  \draw[dotted] (9.5,1.5)--(9.5,1);
  \draw[dotted] (9,-2.45)--(9,-2);
  \draw[dotted] (11.5,-2.45)--(11.5,-2);
  \draw[thick] (9.5,1)|-(12.5,0);
  \node[fill=white,inner sep=1pt] at (10.2,0) {$\ell$};
  \draw (7.2,-0.3)|-(12.5,-0.9);
  \draw (5.7,0.1)|-(7.5,-0.5);
  \draw (6.5,1)|-(8,-1.3);
  \draw (8.3,-0.5)|-(8.8,-1.5);
  \draw (5.8,1.2)|-(11,0.7);
  \draw (10.5,1.2)|-(12.5,0.4);
  \draw (12.2,0.9)|-(12.8,-1.5);
  \draw[dashed] (6.1,-2)--(6.1,1.2);
  \draw[dashed] (7.7,-2)--(7.7,-0.6);
  \draw[dashed] (8.55,-2)--(8.55,-1.2);
  \draw[dashed] (11.5,-2)--(11.5,1);
  \draw[dashed] (12.5,-2)--(12.5,-1.2);
  \draw (5.5,-2) node[left]{$h$}--(13,-2);
\end{tikzpicture}
\vspace*{-1.5ex}
\caption{Horizontal shift considered in the proof}
\label{fig:shift}
\end{figure}
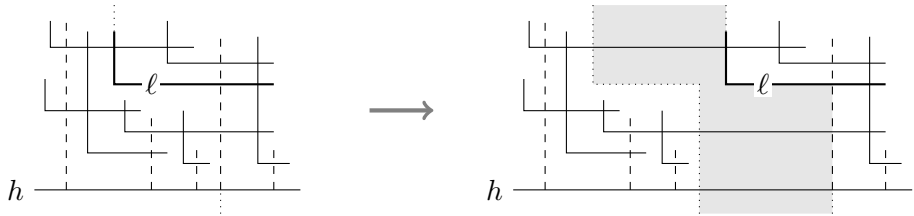

Let $\ell\in\famL$.
Let $s$ be part of $h(\ell)$ from the corner of $\ell$ to the intersection point with the leftmost support of $\ell$.
By Condition~\ref{cond:handle}, no other L-shape in $\famL$ intersects $s$.
Cut the plane along $s$ and the rays emanating from the left endpoint of $s$ upwards and from the right endpoint of $s$ downwards, and shift the two parts horizontally, as illustrated in Figure~\ref{fig:shift}, keeping $\ell$ and the leftmost support of $\ell$ in the right part and adding appropriate horizontal connections to the L-shapes in $\famL$ that have been cut through.
This ensures that no corner or endpoint of an L-shape in $\famL$ lies below the handle of $\ell$, while preserving Conditions \ref{cond:handle} and~\ref{cond:hook}.
Condition~\ref{cond:below} is ensured by performing such a shift for every $\ell\in\famL$.
\end{reduction}

\begin{condition}
\label{cond:left1}
No corner of an L-shape in $\famL$ lies to the left of two intersecting members of $\famL\cup\famS$.
\end{condition}

\begin{reduction}
Let $\ell\in\famL$ and $\ell'\in\famL\cup\famS$ be such that $\ell'$ crosses $h(\ell)$.
Let $s$ be the part of $v(\ell)$ that lies to the left of $\ell'$.
No L-shape in $\famL$ can cross $s$, otherwise (by Condition \ref{cond:hook}) it would also cross $\ell'$, thus creating a triangle.
Cut the plane along $s$ and the rays emanating from the top endpoint of $s$ leftwards and from the bottom endpoint of $s$ rightwards, and shift the two parts vertically keeping $\ell$ in the upper part and adding appropriate vertical connections to the L-shapes in $\famL$ and the vertical segments in $\famS$ that have been cut through, in a way analogous to a horizontal shift illustrated in Figure~\ref{fig:shift}.
This ensures that no corner of an L-shape in $\famL$ lies to the left of $s$, while preserving Conditions \ref{cond:handle}--\ref{cond:below}.
Condition~\ref{cond:left1} is ensured by performing such a shift for any $\ell\in\famL$ and $\ell'\in\famL\cup\famS$ such that $\ell'$ crosses $h(\ell)$.
\end{reduction}

\begin{condition}
\label{cond:left2}
If $\ell\in\famL$ and $\ell'\in\famL\cup\famS$ intersect, then $v(\ell)$ does not lie entirely to the left of $\ell'$.
In~particular, no two L-shapes in $\famL$ occur in configuration (a) from Figure~\ref{fig:conf}.
\end{condition}

\begin{reduction}
Let $\famL^*$ be the L-shapes $\ell\in\famL$ for which there is $\ell'\in\famL\cup\famS$ intersecting $\ell$ such that $v(\ell)$ lies to the left of $\ell'$.
For any $\ell\in\famL^*$, by Condition~\ref{cond:left1}, no corner of an L-shape in $\famL$ lies to the left of $\ell$, which implies that no L-shape in $\famL$ intersect $v(\ell)$.
Therefore, the L-shapes in $\famL^*$ are pairwise disjoint, so $\chi(\famL)\leq\chi(\famL\setminus\famL^*)+1$.
We set $\famL:=\famL\setminus\famL^*$, and Condition~\ref{cond:left2} follows.
\end{reduction}

\begin{claim}
\label{claim}
Let $\ell_1,\ell_2,\ell\in\famL$.
If $\ell_1$ and $\ell_2$ are in configuration (e) or (f) from Figure~\ref{fig:conf} and the corner of $\ell_1$ lies below $h(\ell)$, then the corner of $\ell_2$ also lies below $h(\ell)$.
\end{claim}

\begin{claimproof}
Suppose that not all of $h(\ell_1)$ lies below $h(\ell)$.
The L-shapes $\ell$ and $\ell_1$ cannot intersect, as they would contradict Condition~\ref{cond:hook}, so $v(\ell_1)$ lies below $h(\ell)$.
This and Condition~\ref{cond:hook} imply that $v(\ell_1)$ lies to the left of the rightmost support of $\ell$, which contradicts Condition~\ref{cond:left2}.
This shows that $h(\ell_1)$ lies below $h(\ell)$, so the corner of $\ell_2$ lies above or below $h(\ell)$.
If it lies above $h(\ell)$, then $\ell$ and $\ell_1$ intersect, so (by Condition~\ref{cond:below}) the leftmost support of $\ell_2$ intersects both $\ell$ and $\ell_1$, forming a triangle.
This contradiction shows that the corner of $\ell_2$ lies below $h(\ell)$.
\end{claimproof}

\begin{condition}
\label{cond:vertical}
No two L-shapes in $\famL$ occur in configuration (e) or (f) from Figure \ref{fig:conf}.
\end{condition}

\begin{reduction}
Suppose $\ell_1,\ell_2\in\famL$ occur in configuration (e) or (f), where $\ell_1$ has lower horizontal segment.
Pull the top endpoint of $\ell_1$ up onto the horizontal line containing the top endpoint of $\ell_2$ to obtain an L-shape $\ell_1'$ such that $h(\ell_1)=h(\ell_1')$ and $v(\ell_1)\subset v(\ell_1')$.
Let $\famL'=\famL\setminus\{\ell_1\}\cup\{\ell_1'\}$.
Clearly, $\famL'$ satisfies Conditions \ref{cond:handle}--\ref{cond:below}.
If $h(\ell_1)$ crosses some $\ell'\in\famL\cup\famS$, then (by Condition~\ref{cond:left2}) the top endpoint of $\ell_1$ is higher than the top endpoint of $\ell'$.
Therefore, for any $\ell'\in\famL\cup\famS$, Condition~\ref{cond:left1} for the pair $(\ell_1',\ell')$ follows from Condition~\ref{cond:left1} for the pair $(\ell_1,\ell')$.
If $v(\ell_1')$ crosses $h(\ell)$ for some $\ell\in\famL$, then (by Claim~\ref{claim}) the corner of $\ell_2$ lies below $h(\ell)$, so (since the top endpoints of $\ell_1'$ and $\ell_2$ lie on the same horizontal line) $\ell_2$ crosses $h(\ell)$ and the part of $v(\ell)$ to the left of $\ell_1'$ is equal to the part of $v(\ell)$ to the left of $\ell_2$.
Therefore, for any $\ell\in\famL$, Conditions \ref{cond:left1} and~\ref{cond:left2} for the pair $(\ell,\ell_1')$ follow from Conditions \ref{cond:left1} and~\ref{cond:left2} for the pair $(\ell,\ell_2)$.
It follows from the above that $\famL'$ satisfies Conditions \ref{cond:left1} and~\ref{cond:left2}.
In particular, $\famL'\cup\famS$ is triangle-free (in every triangle, the corner of one L-shape lies to the left of the other two members, contradicting Condition~\ref{cond:left1}).
We set $\famL:=\famL'$ and repeat the whole process for a next pair $\ell_1,\ell_2\in\famL$ occurring in configuration (e) or (f), until no such pair remains.
This must terminate, as the set of horizontal lines containing the top endpoints of the L-shapes in $\famL$ does not change.
The final family $\famL$ satisfies Condition~\ref{cond:vertical}.
\end{reduction}

\begin{condition}
\label{cond:horizontal}
No two L-shapes in $\famL$ occur in configuration (g) from Figure \ref{fig:conf}.
\end{condition}

\begin{reduction}
Suppose $\ell_1,\ell_2\in\famL$ occur in configuration (g), where $\ell_1$ has lower horizontal segment.
Pull the right endpoint of $\ell_1$ further to the right onto the rightmost support of $\ell_2$ to obtain an L-shape $\ell_1'$ such that $h(\ell_1)\subset h(\ell_1')$ and $v(\ell_1)=v(\ell_1')$.
Let $\famL'=\famL\setminus\{\ell_1\}\cup\{\ell_1'\}$.
Clearly, $\famL'$ satisfies Conditions \ref{cond:handle}--\ref{cond:below} (Condition~\ref{cond:below} for the right endpoint of $\ell_1'$ follows from Conditions \ref{cond:hook} and~\ref{cond:below} for the right endpoint of $\ell_2$).
Let $s$ be the leftmost support of $\ell_2$.
By Condition~\ref{cond:below}, $h(\ell_1)$ and $s$ intersect.
Suppose that $h(\ell_1')\setminus h(\ell_1)$ intersects some $\ell'\in\famL\cup\famS$.
Every point on $v(\ell_1)$ to the left of $\ell'$ lies to the left of $s$ or to the left of both $\ell_2$ and $\ell'$ (in the latter case, $\ell_2$ and $\ell'$ intersect).
Therefore, Conditions \ref{cond:left1} and~\ref{cond:left2} for the pair $(\ell_1',\ell')$ follow from Conditions \ref{cond:left1} and~\ref{cond:left2} for the pairs $(\ell_1,s)$ and $(\ell_2,\ell')$.
It follows that $\famL'$ satisfies Conditions \ref{cond:left1} and~\ref{cond:left2}; in particular, $\famL'\cup\famS$ is triangle-free.
Now, suppose that $\ell_1'$ and some $\ell'\in\famL$ are in configuration (e) or (f) from Figure~\ref{fig:conf}.
Since $\ell_1$ and $\ell'$ are not (by Condition~\ref{cond:vertical}), $\ell_1$ and $v(\ell_2)$ lie to the left of the vertical line containing $v(\ell')$.
Since $\ell_1$ and $\ell_2$ are in configuration (g), while $\ell_2$ and $\ell'$ are not in configuration (a), (b), (e), or (f) (by Conditions \ref{cond:hook}, \ref{cond:left2}, and~\ref{cond:vertical}), $\ell_2$ lies entirely below the horizontal line containing $h(\ell')$.
Therefore, $v(\ell_2)$ lies to the left of the leftmost support of $\ell_2'$ while $h(\ell_2)$ intersects that support (by Condition~\ref{cond:below}), which contradicts Condition~\ref{cond:left2}.
This shows that $\famL'$ satisfies Condition~\ref{cond:vertical}.
We set $\famL:=\famL'$ and repeat the whole process for a next pair $\ell_1,\ell_2\in\famL$ occurring in configuration (g), until no such pair remains.
This must terminate, as the set of vertical lines containing the right endpoints of the L-shapes in $\famL$ does not change.
The final family $\famL$ satisfies Condition~\ref{cond:horizontal}.
\end{reduction}

Now, of all configurations illustrated in Figure~\ref{fig:conf}, only (d) and (h) can occur in $\famL$, and the desired bound $\chi(\famL)=O(\log\log{|\famL|})$ follows from Lemma~\ref{lem:frames}.
This completes the proof of Lemma~\ref{lem:key}.
\end{proof}

\section{Proof of Theorem \ref{thm:main}}

Let $\famF_0$ be a triangle-free family of $n$ L-shapes.
Without loss of generality (performing an appropriate perturbation of the L-shapes in $\famF_0$ if necessary), we can assume that
\begin{itemize}
\item the horizontal segments of the L-shapes in $\famF_0$ have distinct $y$-coordinates,
\item the vertical segments of the L-shapes in $\famF_0$ have distinct $x$-coordinates.
\end{itemize}
We construct subfamilies $\famF_1$, $\famF_2$, and $\famF_3$ so that
\begin{itemize}
\item $\famF_0\supset\famF_1\supset\famF_2\supset\famF_3$,
\item $\chi(\famF_i)=O(\chi(\famF_{i+1})+\log\log n)$ for each $i\in\{0,1,2\}$,
\item $\chi(\famF_3)$ is bounded by a constant.
\end{itemize}
These conditions imply that $\chi(\famF_0)=O(\log\log n)$.

Let $i\in\{0,1,2\}$.
We construct $\famF_{i+1}$ from $\famF_i$ as follows.
By Lemma~\ref{lem:radius2}, there is an L-shape $\ell_i\in\famF_i$ such that $\chi(\famF_i)\leq\alpha\chi(\famF_i(\ell_i))$ for some absolute constant $\alpha$.
Let the L-shapes in $\famF_i\setminus\{\ell_i\}$ that cross $h(\ell_i)$ be called \emph{$h(\ell_i)$-supports} and those that cross $v(\ell_i)$ be called \emph{$v(\ell_i)$-supports}.
The $h(\ell_i)$-supports and $v(\ell_i)$-supports are pairwise disjoint, as $\famF_i$ is triangle-free.
Every L-shape in $\famF_i(\ell_i)$ crosses at least one $h(\ell_i)$-support or $v(\ell_i)$-support.
Let $h_i$ be the horizontal line containing $h(\ell_i)$ and $v_i$ be the vertical line containing $v(\ell_i)$.
Define subfamilies $\famF_i^1,\ldots,\famF_i^6$ of $\famF_i(\ell_i)$ as follows:
\begin{align*}
\famF_i^1&=\{\ell\in\famF_i(\ell_i)\colon\text{some $h(\ell_i)$-support crosses $h(\ell)$, and $h(\ell)$ lies above $h_i$}\},\\
\famF_i^2&=\{\ell\in\famF_i(\ell_i)\colon\text{some $h(\ell_i)$-support crosses $h(\ell)$, and $h(\ell)$ lies below $h_i$}\},\\
\famF_i^3&=\{\ell\in\famF_i(\ell_i)\colon\text{some $h(\ell_i)$-support crosses $v(\ell)$, but no $h(\ell_i)$-support crosses $h(\ell)$}\},\\
\famF_i^4&=\{\ell\in\famF_i(\ell_i)\colon\text{some $v(\ell_i)$-support crosses $v(\ell)$, and $v(\ell)$ lies to the right of $v_i$}\},\\
\famF_i^5&=\{\ell\in\famF_i(\ell_i)\colon\text{some $v(\ell_i)$-support crosses $v(\ell)$, and $v(\ell)$ lies to the left of $v_i$}\},\\
\famF_i^6&=\{\ell\in\famF_i(\ell_i)\colon\text{some $v(\ell_i)$-support crosses $h(\ell)$, but no $v(\ell_i)$-support crosses $v(\ell)$}\}.
\end{align*}
It follows that $\famF_i(\ell_i)=\famF_i^1\cup\cdots\cup\famF_i^6$ and thus $\chi(\famF_i(\ell_i))\leq\chi(\famF_i^1)+\cdots+\chi(\famF_i^6)$.

First, we prove that $\chi(\famF_i^1)=O(\log\log n)$.
Let $\famL=\famF_i^1$.
Every L-shape in $\famL$ lies above $h_i$.
Let $\famS$ be the $h(\ell_i)$-supports with the parts lying below $h_i$ removed (they are vertical segments starting at $h_i$).
It follows that $h_i$, $\famL$, and $\famS$ satisfy the assumptions of Lemma~\ref{lem:key}, which then implies that $\chi(\famL)=O(\log\log{|\famL|})=O(\log\log n)$.
By symmetry, we also have $\chi(\famF_i^4)=O(\log\log n)$.

Next, we prove that $\chi(\famF_i^6)=O(\log\log n)$.
Let $\famL=\famF_i^6$.
Every L-shape in $\famL$ lies above $h_i$.
Let $\famS$ be the $v(\ell_i)$-supports with the parts lying to the right of $v_i$ removed (they are L-shapes with right endpoints at $v_i$).
Let $s\in\famS$.
By the definition of $\famF_i^6$ and the fact that the $v(\ell_i)$-supports are pairwise disjoint, no L-shape in $\famL$ or $\famS$ crosses $h(s)$.
Cut the plane along $h(s)$ and the rays emanating from the left endpoint of $h(s)$ upwards and from the right endpoint of $h(s)$ downwards, and shift the two parts horizontally, like in Figure~\ref{fig:shift}, keeping $s$ and $\ell_i$ in the right part and adding appropriate horizontal connections to the L-shapes in $\famL$ and $\famS$ that have been cut through.
This ensures that no part of an L-shape in $\famL$ lies below $h(s)$.
Perform such a horizontal shift for every $s\in\famS$.
Then, replace each $s\in\famS$ by the vertical extension of $v(s)$ down to $h_i$ (it intersects no more L-shapes in $\famL$).
The horizontal line $h_i$ and the families $\famL$ and $\famS$ thus obtained satisfy the assumptions of Lemma~\ref{lem:key}, which then implies that $\chi(\famL)=O(\log\log{|\famL|})=O(\log\log n)$.
By~symmetry, we also have $\chi(\famF_i^3)=O(\log\log n)$.

Let $k_i\in\{2,5\}$ be such that $\chi(\famF_i^{\smash[b]{k_i}})=\max\{\chi(\famF_i^2),\chi(\famF_i^5)\}$, and let $\famF_{i+1}=\famF_i^{\smash[b]{k_i}}$.
It follows that $\chi(\famF_i)\leq\alpha\chi(\famF_i(\ell_i))=O(\chi(\famF_{i+1})+\log\log n)$.
It remains to prove that $\chi(\famF_3)$ is bounded.

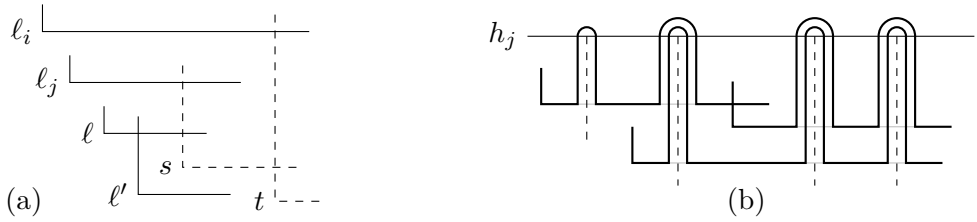
\begin{figure}[t]
\begin{tikzpicture}[scale=0.45]
  \draw (-0.8,2.3)--(-0.8,1.5) node[left] {$\ell_i$}--(7,1.5);
  \draw[dashed] (6,2)--(6,-3.5) node[left] {$t$}--(7.5,-3.5);
  \draw (0,0.8)--(0,0) node[left] {$\ell_j$}--(5,0);
  \draw[dashed] (3.3,0.5)--(3.3,-2.5) node[left] {$s$}--(6.8,-2.5);
  \draw (1,-0.7)--(1,-1.5) node[left] {$\ell$}--(4,-1.5);
  \draw (2,-1)--(2,-3.3) node[left] {$\ell'$}--(4.7,-3.3);
  \node[left] at (-0.5,-3.5) {(a)};
\end{tikzpicture}\hskip 2cm
\begin{tikzpicture}[scale=0.6,yscale=-1]
  \draw (0.2,0)--(10,0);
  \draw[black!25] (1.3,1.5)--(1.7,1.5) (3.1,1.5)--(3.9,1.5);
  \draw[black!25] (6.1,2)--(6.9,2) (7.9,2)--(8.7,2);
  \draw[black!25] (3.3,2.8)--(3.7,2.8) (6.3,2.8)--(6.7,2.8) (8.1,2.8)--(8.5,2.8);
  \draw[thick] (0.5,0.7)--(0.5,1.5)--(1.3,1.5)--(1.3,0) arc (-180:0:2mm)--(1.7,1.5)--(3.1,1.5)--(3.1,0) arc (-180:0:4mm)--(3.9,1.5)--(5.5,1.5);
  \draw[thick] (4.7,1)--(4.7,2)--(6.1,2)--(6.1,0) arc (-180:0:4mm)--(6.9,2)--(7.9,2)--(7.9,0) arc (-180:0:4mm)--(8.7,2)--(9.5,2);
  \draw[thick] (2.5,2)--(2.5,2.8)--(3.3,2.8)--(3.3,0) arc (-180:0:2mm)--(3.7,2.8)--(6.3,2.8)--(6.3,0) arc (-180:0:2mm)--(6.7,2.8)--(8.1,2.8)--(8.1,0) arc (-180:0:2mm)--(8.5,2.8)--(9.3,2.8);
  \draw[dashed] (1.5,0)--(1.5,2.3);
  \draw[dashed] (3.5,0)--(3.5,3.3);
  \draw[dashed] (6.5,0)--(6.5,3.3);
  \draw[dashed] (8.3,0)--(8.3,3.3);
  \node[left] at (0.2,0) {$h_j$};
  \node[below] at (5,3.1) {(b)};
\end{tikzpicture}
\vspace*{-2ex}
\caption{Final part of the proof of Theorem~\ref{thm:main}}
\label{fig:final}
\end{figure}

There are indices $i,j\in\{0,1,2\}$ such that $i<j$ and $k_i=k_j\in\{2,5\}$.
Suppose $k_i=k_j=2$.
The \emph{hook} of an L-shape $\ell\in\famF_3$ is the part of $\ell$ to the right of the rightmost intersection point of $h(\ell)$ with an $h(\ell_i)$-support (from $\famF_j$).
This is well defined---since $\ell\in\famF_j^2$, at least one $h(\ell_j)$-support crosses $h(\ell)$.
We claim that every crossing between two L-shapes in $\famF_3$ is a crossing between the vertical part of one and the hook of the other.
Suppose to the contrary that there are two intersecting L-shapes $\ell,\ell'\in\famF_3$ such that $v(\ell')$ intersects $h(\ell)$ but not the hook of $\ell$.
Let $s$ be the $h(\ell_j)$-support (from $\famF_j$) with rightmost intersection point with $h(\ell)$, so that $v(\ell')$ intersects $h(\ell)$ to the left of $s$.
Since $s\in\famF_i^2$, there is an $h(\ell_i)$-support $t\in\famF_i$ that crosses $h(s)$; see Figure \ref{fig:final}~(a).
The segment $h(\ell_j)$ lies entirely to the left of $t$, otherwise $\ell_j$, $s$, and $t$ would form a triangle.
Now, any $h(\ell_j)$-support (from $\famF_j$) that crosses $h(\ell')$ must intersect $\ell$ (forming a triangle with $\ell$ and $\ell'$) or $s$ (forming a triangle with $\ell_j$ and $s$).
This contradiction shows that every crossing between two L-shapes in $\famF_3$ is a crossing between the vertical part of one and the hook of the other.
Consequently, the family $\famF_3$ can be transformed into an ``upside-down'' $LR$-family of $h_j$-even-curves with the same intersection graph, as illustrated in Figure \ref{fig:final}~(b).
Therefore, by Lemma~\ref{lem:even}, $\chi(\famF_3)$ is bounded.
This completes the proof of Theorem~\ref{thm:main} for the case $k_i=k_j=2$.
The proof for the case $k_i=k_j=5$ is analogous, by symmetry.

\end{document}